\documentclass[a4paper,abstracton]{scrartcl}

\usepackage{amsfonts,amsthm,amssymb,amsmath}
\usepackage[latin1]{inputenc}
\usepackage[T1]{fontenc}
\usepackage{graphicx}
\usepackage[inline]{enumitem}
\usepackage{ifpdf}
   \ifpdf
      \DeclareGraphicsRule{*}{mps}{*}{}
   \fi
   
\usepackage{enumitem}
\usepackage{hyperref}

\newtheorem{thm}{Theorem}[section]

\theoremstyle{remark}

\newtheorem{con}[thm]{Conjecture}

\newcommand{\Aut}{\mathrm{Aut \,}}

\title{Local finiteness, distinguishing numbers and Tucker's conjecture}
\author{Florian Lehner\thanks{The author acknowledges the support of the Austrian Science Fund (FWF), project W1230-N13.} \ and Rögnvaldur G.~Möller}

\begin{document}
\maketitle

\begin{abstract}
A distinguishing colouring of a graph is a colouring of the vertex set such that no non-trivial automorphism preserves the colouring. Tucker conjectured that if every non-trivial automorphism of a locally finite graph moves infinitely many vertices, then there is a distinguishing $2$-colouring.

We show that the requirement of local finiteness is necessary by giving a non-locally finite graph for which no finite number of colours suffices.
\end{abstract}

\section{Introduction}
\label{sec:intro}

A colouring of the vertices of a graph $G$ is called {\em distinguishing} if no  non-trivial automorphism of $G$ preserves the colouring. This notion was first studied by Albertson and Collins \cite{MR1394549}, motivated by a recreational mathematics problem posed Rubin \cite{rubin}. 

While a distinguishing colouring clearly exists for every graph (simply colour every vertex with a different colour), finding a distinguishing colouring with the minimum number of colours can be challenging.

For infinite graphs one of the most intriguing questions is whether or not the following conjecture of Tucker \cite{MR2776826} is true.  

\begin{con}
\label{con:tucker}
Let $G$ be an infinite, connected, locally finite graph with infinite motion. Then there is a distinguishing $2$-colouring of $G$.
\end{con}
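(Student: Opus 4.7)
My plan is to attack Conjecture~\ref{con:tucker} by adapting the probabilistic motion-lemma from the finite case to the locally finite setting. Fix a root $v_0 \in V(G)$ and consider the BFS balls $B_n = \{v : d(v_0,v) \leq n\}$; local finiteness makes each $B_n$ finite, so $\Aut G$ embeds into the inverse limit of the finite permutation groups $\Aut(G)|_{B_n}$. Every non-trivial $\varphi \in \Aut G$ is first witnessed on some $B_n$ (meaning $\varphi|_{B_{n-1}} = \id$ while $\varphi|_{B_n} \neq \id$), and infinite motion tells us the set of moved vertices $\{v : \varphi(v) \neq v\}$ is infinite.

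Next, sample a uniform i.i.d.\ random $2$-colouring $c : V(G) \to \{0,1\}$. For any single non-trivial $\varphi$, if $\varphi$ has $k$ cycles of length $\geq 2$ on $V(G)$ then $\Pr[c \circ \varphi = c] = 2^{-k}$, which is zero by infinite motion. The real difficulty is that $\Aut G$ can be uncountable, so one cannot naively union-bound over all $\varphi$. I would parameterize instead: for each $n$ let $\mathcal R_n$ be the (finite) set of non-identity partial automorphisms of $B_n$ which are the identity on $B_{n-1}$ and extend to some $\varphi \in \Aut G$, so that every non-trivial $\varphi$ appears in a unique $\mathcal R_n$. A representative-based union bound then gives
\[
\Pr[c \text{ not distinguishing}] \leq \sum_{n \geq 0} \sum_{\sigma \in \mathcal R_n} 2^{-k_n(\sigma)},
\]
where $k_n(\sigma)$ counts the cycles of length $\geq 2$ of $\sigma$ inside $B_n$. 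If one can make this sum less than $1$, the conjecture follows.

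The main obstacle is precisely this quantitative balance. Local finiteness gives a filtration but no a priori growth rate on $|B_n|$ or on $|\mathcal R_n|$, which can grow doubly exponentially, whereas infinite motion by itself only tells us $k_n(\sigma) \to \infty$ with no explicit rate. For graphs of subexponential growth the classical motion-lemma bound $|\mathcal R_n| \leq 2^{k/2}$ pushes the sum through, but in full generality one must inject extra structural input — for instance, a dichotomy isolating highly transitive stabiliser actions for a separate combinatorial treatment, or a greedy/recursive colouring that repairs surviving automorphisms sphere-by-sphere rather than relying on a single random draw. This is where I expect the real work to lie, and it is presumably why the conjecture remains open in the generality stated.
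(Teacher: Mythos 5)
The statement you are addressing is Conjecture~\ref{con:tucker}, and the paper does not prove it --- it is stated as an open conjecture, the whole point of the paper being to show that the hypothesis of local finiteness cannot be dropped (via an explicit non-locally-finite counterexample built from $\Aut(\mathbb{Q},\leq)$). So there is no ``paper's own proof'' to compare against, and your proposal, by your own admission in the final paragraph, is not a proof either: you set up the random-colouring framework and then correctly observe that the quantitative step fails. That is the honest assessment, but it means nothing has been established.

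Concretely, the gap is exactly where you locate it, and it is worth being precise about why it is not a mere technicality. For a fixed non-trivial $\varphi$ the event $c\circ\varphi=c$ does have probability $0$ under infinite motion (each orbit of $\varphi$ of size $\geq 2$ contributes an independent factor $\leq 1/2$, and there are infinitely many), so single automorphisms are handled. The union bound over the countably many finite sets $\mathcal R_n$ is also legitimate as an upper bound, since a colour-preserving $\varphi$ restricts to a colour-preserving permutation of $B_n$. What fails is that for a fixed $\sigma\in\mathcal R_n$ the exponent $k_n(\sigma)$ is a fixed finite number with no lower bound in terms of $|\mathcal R_n|$: infinite motion constrains the full automorphism, not its restriction to a ball, and stabilizers of balls can be enormous relative to the displacement visible inside the ball. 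This is precisely the regime covered only under extra growth hypotheses in the literature the paper cites (\cite{growth}, \cite{lehner-randomcolouring}), and the general locally finite case remains open. Your sketch is a reasonable summary of the state of the art, but it should not be presented as a proof of the conjecture; if anything, the paper's contribution is a warning that any successful argument must use local finiteness in an essential way, since the statement is false without it.
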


This conjecture can be viewed as a generalisation of a result on finite graphs due to Russell and Sundaram \cite{MR1617449}.  It is known to be true for many classes of infinite graphs including trees \cite{MR2302536}, tree-like graphs \cite{MR2302543}, and graphs with countable automorphism group \cite{istw}. In \cite{smtuwa} it is shown that graphs satisfying the so-called distinct spheres condition have infinite motion as well as distinguishing number two. Examples for such graphs include leafless trees, graphs with infinite diameter and primitive automorphism group, vertex-transitive graphs of connectivity $1$, and Cartesian products of graphs where at least two factors have infinite diameter. It is also known that Conjecture~\ref{con:tucker} is true for graphs fulfilling certain growth conditions \cite{growth}. In \cite{lehner-randomcolouring} it is shown that for locally finite graphs random colourings have a good chance of being distinguishing. 

Many of the above results also hold for non-locally finite graphs which raises the question, whether the condition of local finiteness in Tucker's conjecture can be dropped. 

A first indication, that local finiteness may be necessary has been given in the setting of permutation groups acting on countable sets. Here, instead of considering the automorphism group of a graph acting on the vertex set, we consider (faithful) group actions. A generalization of Conjecture \ref{con:tucker} to this setting has been given by Imrich et al.~\cite{istw}.

\begin{con}
\label{con:groups}
Let $\Gamma$ be a closed, subdegree finite permutation group on a set $S$.  Then there is a distinguishing $2$-colouring of $S$.
\end{con}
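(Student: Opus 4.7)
The plan is to attempt a proof via the probabilistic method, extending the random-colouring technique of \cite{lehner-randomcolouring} to this group-theoretic setting. I would colour each $s \in S$ independently and uniformly with one of two colours, and aim to show that with positive probability no non-trivial $\gamma \in \Gamma$ preserves the colouring. Since $\Gamma$ may be uncountable (think of $\Aut$ of a regular tree), a naive union bound over $\Gamma \setminus \{\id\}$ fails, so the first task is to reduce to countably many events.

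To obtain that reduction, I would exploit closedness and subdegree finiteness as follows. Fix a basepoint $s_0 \in S$; subdegree finiteness decomposes $S$ into countably many finite $\Gamma_{s_0}$-orbits $O_1, O_2, \ldots$. Setting $F_n := \{s_0\} \cup O_1 \cup \cdots \cup O_n$, closedness implies that every $\gamma \in \Gamma$ is determined by its restrictions to the $F_n$. I would stratify the non-trivial elements by the least $n$ at which they act non-trivially on $F_n$, and group elements within a stratum by their common restriction to $F_n$. Each $F_n$ being finite, this yields countably many classes, and the task reduces to bounding, for a representative $\gamma$ of each class, the probability that a random colouring is $\gamma$-preserving.

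For each representative $\gamma$, the goal would be to produce infinitely many disjoint finite $\gamma$-invariant blocks on each of which $\gamma$ acts non-trivially. On each such block the probability that the random colouring is preserved is bounded away from $1$; independence across blocks forces this probability to zero, and a union bound over the countably many classes then gives positive probability of a distinguishing colouring. The main obstacle is precisely the construction of such blocks: in the locally finite graph version of Tucker's conjecture the hypothesis of infinite motion supplies the required rich cycle structure of each non-trivial automorphism directly, whereas here one must derive an analogue from subdegree finiteness and closedness alone. A candidate heuristic is that any non-trivial $\gamma$ with only finite support would, via closedness, force the existence of enough conjugates with overlapping supports to violate subdegree finiteness on an enlarged orbit; turning this heuristic into a quantitative statement sufficient to dominate the combined union bound appears to be the principal difficulty, and is the step where I expect the bulk of the work to lie.
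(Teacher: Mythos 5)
The first thing to observe is that the statement you are trying to prove is a \emph{conjecture} (attributed to Imrich et al.), not a theorem of this paper: the paper offers no proof of it, and indeed its own contribution is a counterexample in a neighbouring setting. So there is no argument of the authors to compare yours against, and your proposal is, by your own admission, a programme rather than a proof --- the step you defer (``turning this heuristic into a quantitative statement'') is precisely the open content. Beyond that, note that the statement as displayed omits the hypothesis of infinite motion, which the conjecture it generalises (Conjecture~\ref{con:tucker}) contains and which you would certainly need: without it the claim is false already for finite $S$, since every permutation group on a finite set is closed and subdegree finite, yet $\mathrm{Sym}(S)$ requires $|S|$ colours. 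Your ``candidate heuristic'' about non-trivial elements of finite support is aimed at a non-issue once infinite motion is assumed (such elements are then excluded outright) and cannot rescue the statement if it is not.

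Two further concrete gaps. First, your stratification by restriction to $F_n$ does not produce the countable family of controllable events you need: the event to be union-bounded is ``\emph{some} element of the class preserves $c$'', not ``the chosen representative preserves $c$'', and two elements of $\Gamma$ agreeing on a single finite set $F_n$ can act completely differently outside it; closedness only says an element is determined by its restrictions to \emph{all} the $F_n$. This is exactly the difficulty that the random-colouring argument for locally finite graphs in \cite{lehner-randomcolouring} spends its effort on, and it does not transfer for free. Second, the ``infinitely many disjoint finite $\gamma$-invariant blocks on which $\gamma$ acts non-trivially'' need not exist even under infinite motion: an element all of whose non-trivial orbits are infinite admits no finite invariant set that it moves, so you would have to work with non-invariant finite sets and control how $\gamma$ maps them off themselves, which again is where subdegree finiteness must be exploited in a way nobody currently knows how to do in general. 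A useful sanity check for any proposed argument is Theorem~\ref{thm:dist-q}: $\Aut(\mathbb{Q},\leq)$ is closed, has infinite motion, and has infinite distinguishing number, so every step of your proof must visibly break down when subdegree finiteness is removed; as written, your blocks-and-union-bound outline does not make clear where that happens.
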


For this generalization subdegree finiteness (which plays the role of local finiteness) is known to be necessary \cite{MR2587751}. 

In this short note we show that local finiteness is also necessary in the graph case. More precisely we give a non-locally finite, arc transitive graph with infinite motion which does not admit a distinguishing colouring with any finite number of colours.

\section{Preliminaries}

Throughout this paper we will use Greek letters for group related variables while the Latin alphabet will be reserved for sets on which the group acts. 

Let $S$ be a countable set and let $\Gamma$ be a group acting faithfully (i.e. the identity is the only group element which acts trivially) on $S$ from the left. The image of a point $s \in S$ under an element $\gamma \in \Gamma$ is denoted by $\gamma s$.

The \emph{stabilizer} of $s$ in $\Gamma$ is defined as the subgroup $\Gamma_s = \{\gamma \in \Gamma \mid \gamma s = s\}$.
We say that $\Gamma$ is \emph{subdegree finite} if for every $s \in S$ all orbits of $\Gamma_s$ are finite.


The \emph{motion} of an element $\gamma \in \Gamma$ is the number (possibly infinite) of elements of $S$ which are not fixed by $\gamma$. The \emph{motion of the group $\Gamma$} is the minimal motion of a non-trivial element of $\Gamma$. Notice that the motion is not necessarily finite, in fact all groups considered in this paper have infinite motion.  The \emph{motion of a graph $G$} is the motion of $\Aut G$ acting on the vertex set.

Let $C$ be a (usually finite) set. A \emph{$C$-colouring of $S$} is a map $c \colon S \to C$.  Given a colouring $c$ and $\gamma \in \Gamma$ we say that \emph{$\gamma$ preserves $c$} if $c(\gamma s) = c(s)$ for every $s \in S$.  Call a colouring \emph{distinguishing} if no non-trivial group element preserves the colouring.

\section{The example}

The construction that we use relies on the following result from  \cite{MR2587751} which also shows that there are permutation groups on a countable sets whose distinguishing number is infinite. The proof uses a standard back-and-forth argument, see for example \cite[Sections 9.1 and 9.2]{zbMATH01230795} and \cite[Sections 2.8 and 5.2]{zbMATH00044603}.

\begin{thm}[Laflamme et al.~\cite{MR2587751}]
\label{thm:dist-q}
Let $\Gamma$ be the group of  order automorphisms of $\mathbb Q$ (i.e.\ bijective, order preserving functions $\gamma\colon \mathbb Q \to \mathbb Q$). Then $\Gamma$ has infinite motion but no distinguishing colouring with finitely many colours. \hfill \qedsymbol
\end{thm}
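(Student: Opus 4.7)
For the first claim, infinite motion, I would argue as follows. Take any $\gamma \in \Gamma \setminus \{\id\}$; pick $q_0$ with $\gamma(q_0) \neq q_0$ and, replacing $\gamma$ by $\gamma^{-1}$ if needed, assume $\gamma(q_0) > q_0$. Then for any rational $r \in (q_0, \gamma(q_0))$, order preservation gives $\gamma(r) > \gamma(q_0) > r$, so $r$ is moved; since this interval contains infinitely many rationals, $\gamma$ has infinite motion.

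For the distinguishing part, the plan is to fix any finite colouring $c \colon \mathbb{Q} \to C$ and construct a non-trivial $\gamma \in \Gamma$ preserving $c$ via Cantor's back-and-forth. The first step is a pigeonhole observation: to each $q \in \mathbb{Q}$ I would associate the triple $(L(q), c(q), R(q))$, where $L(q) \subseteq C$ (resp.\ $R(q)$) is the set of colours appearing in every interval $(a,q)$ (resp.\ $(q,b)$). Only finitely many such triples exist, so some pair $p \neq q$ shares one, giving a non-trivial colour- and order-preserving seed $\phi_0 = \{p \mapsto q\}$ on which to base the back-and-forth.

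The second step is the back-and-forth itself. Enumerate $\mathbb{Q} = \{r_1, r_2, \dots\}$ and extend $\phi_0$ inductively to finite colour- and order-preserving partial bijections $\phi_n$, at stage $n$ adding $r_n$ to both domain and range. To add a rational $x$ of colour $i$ lying in a gap $(a, b)$ between already-matched points, I would need a colour-$i$ rational in the image gap $\bigl(\phi_{n-1}(a), \phi_{n-1}(b)\bigr)$. Guaranteeing such a target at every stage is the crux and the main obstacle: the invariant to maintain is that the set of colours occurring in each matched pair of gaps coincides. I would secure this by strengthening the initial pigeonhole step --- using that each open interval of $\mathbb{Q}$ has one of only finitely many ``colour contents'' and refining the seed $p, q$ so that the gaps around them agree on these richer data --- and then propagating the invariant through the induction by the density of $\mathbb{Q}$. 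The union $\gamma = \bigcup_n \phi_n$ is then a non-trivial colour-preserving order automorphism, contradicting the assumption that $c$ is distinguishing.
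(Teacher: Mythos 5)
The paper gives no proof of this theorem: it is quoted from Laflamme et al.\ with only the remark that the argument is a standard back-and-forth, so your proposal must be judged on its own. The infinite-motion half is correct and complete: if $\gamma(q_0)>q_0$, then every rational $r\in(q_0,\gamma(q_0))$ satisfies $\gamma(r)\geq\gamma(q_0)>r$ wait, more precisely $\gamma(r)>\gamma(q_0)>r$, so all infinitely many such $r$ are moved.

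The second half has a genuine gap exactly at the point you yourself flag as the crux. The invariant ``the set of colours occurring in each matched pair of gaps coincides'' is not propagatable in general, and no refinement of the \emph{seed} $p\mapsto q$ can fix this, because the obstruction lives deep inside the gaps, not at their endpoints. Concretely: suppose two matched gaps $(a,b)$ and $(a',b')$ both have colour content $\{1,2\}$, but in $(a,b)$ colour $1$ occurs only in the left half and colour $2$ only in the right half, while in $(a',b')$ both colours are dense. When the enumeration forces you to insert the midpoint $x$ of $(a,b)$, one of the two new gaps has content $\{1\}$ and the other $\{2\}$, whereas every way of splitting $(a',b')$ yields two gaps of content $\{1,2\}$; the invariant dies, and with it the guarantee of finding images at later stages. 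The germ data $(L(q),c(q),R(q))$ records nothing about this, and ``density of $\mathbb Q$'' alone does not help. The standard repair is to pigeonhole on \emph{intervals} rather than on points: since an open interval has one of at most $2^{|C|}$ colour contents, iteratively shrink --- if some colour occurring in $I$ is not dense in $I$, pass to a subinterval omitting it, which strictly decreases the content --- and after at most $|C|$ steps reach a non-empty open interval $I$ in which every occurring colour is dense in every subinterval. Inside such an $I$ your invariant holds automatically for all gaps, the back-and-forth goes through starting from any seed $p\neq q$ of equal colour in $I$, and the resulting colour-preserving order automorphism of $I\cap\mathbb Q$ extends by the identity outside $I$ to a non-trivial element of $\Gamma$ preserving $c$. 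With that lemma in place of your seed-refinement step, the argument closes.
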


Clearly the group $\Gamma$ of the above theorem is the full automorphism group of the following directed graph: take $\mathbb Q$ as vertex set and draw an edge from $q$ to $r$ if $q \leq r$. The underlying undirected graph is the complete countable graph which also has infinite distinguishing number but finite motion.




\begin{figure}
\begin{center}
\includegraphics{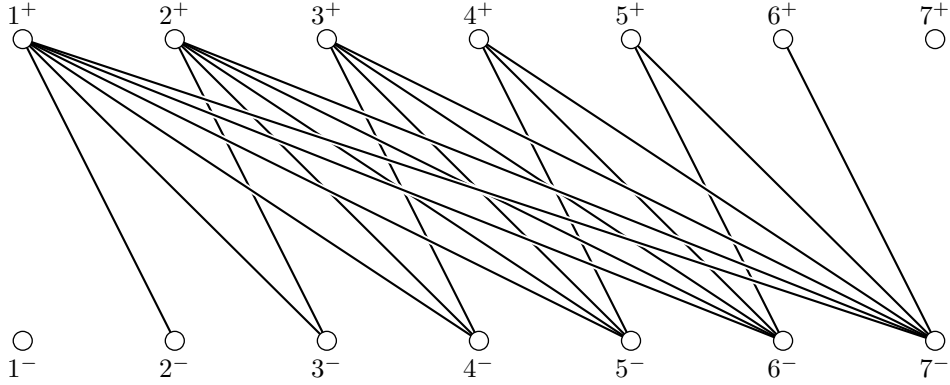}
\end{center}
\caption[An induced subgraph of the graph from Theorem \ref{thm:counterexample}.]{An induced subgraph of the graph in Theorem \ref{thm:counterexample}. Note that edges only go from top left to bottom right. By the definition of the graph all such edges are present and every edge is of this type.}
\label{fig:induced}
\end{figure}

\begin{thm}
\label{thm:counterexample}
There is a countable, connected, arc transitive graph with infinite motion which has no distinguishing colouring with a finite number of colours.
\end{thm}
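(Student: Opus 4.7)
The plan is to construct an undirected bipartite graph whose automorphisms come essentially from $\Aut(\mathbb{Q},<)$, and then transfer the conclusion of Theorem~\ref{thm:dist-q}.

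Take $V(G) = \mathbb{Q}_0 \sqcup \mathbb{Q}_1$, two disjoint labelled copies of $\mathbb{Q}$, and put an edge between $q_0 \in \mathbb{Q}_0$ and $r_1 \in \mathbb{Q}_1$ exactly when $q < r$. Countability is immediate, and connectedness is easy: any two vertices share a common neighbour (on one side, any rational exceeding both indices; across sides, obvious). For arc-transitivity I will use two families of automorphisms. Each $\gamma \in \Aut(\mathbb{Q},<)$ induces a componentwise automorphism $\bar\gamma$, and the reflection $\rho\colon q_i \mapsto (-q)_{1-i}$ is an automorphism because $q < r$ iff $-r < -q$. Together with the classical homogeneity of $(\mathbb{Q},<)$ on ordered pairs $q < r$, these suffice to send any arc to any other.

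The crucial structural step is to pin down $\Aut G$. Bipartiteness forces every automorphism either to preserve or to swap the two parts. A bipartition-preserving one is given by a pair of bijections $\sigma, \tau$ of $\mathbb{Q}$ satisfying $q < r \iff \sigma(q) < \tau(r)$. Setting $r = q$ yields $\sigma(q) \ge \tau(q)$; a short argument using bijectivity of $\sigma$ and density of $\mathbb{Q}$ (if $\sigma(q_0) > \tau(q_0)$, choose $q_1$ with $\tau(q_0) < \sigma(q_1) < \sigma(q_0)$, deduce $q_1 > q_0$ from the biimplication, and derive $\sigma(q_1) < \tau(q_1)$, contradicting $\sigma \ge \tau$ at $q_1$) rules out strict inequality, so $\sigma = \tau$ is a single order automorphism of $\mathbb{Q}$. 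Bipartition-swapping automorphisms are then forced to be of the form $\bar\gamma \circ \rho$.

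Infinite motion follows at once: every bipartition-swapping automorphism moves every vertex, and a non-trivial order automorphism $\gamma$ of $\mathbb{Q}$ moves every rational in the open interval $(q,\gamma q)$ whenever $\gamma q > q$ (since $r \in (q,\gamma q)$ gives $\gamma r > \gamma q > r$). For the distinguishing number, given any colouring $c\colon V(G) \to \{1,\dots,k\}$, I form the $k^2$-colouring $d(q) = (c(q_0), c(q_1))$ of $\mathbb{Q}$; by Theorem~\ref{thm:dist-q} some non-trivial order automorphism $\gamma$ preserves $d$, and then $\bar\gamma$ is a non-trivial graph automorphism preserving $c$. The main technical hurdle is the characterisation of bipartition-preserving automorphisms; once this is in place, both infinite motion and the failure of finite distinguishing number follow directly from Theorem~\ref{thm:dist-q} and the homogeneity of the rationals.
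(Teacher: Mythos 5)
Your proof is correct and follows essentially the same route as the paper: the same bipartite graph on two copies of $\mathbb{Q}$, the same two families of automorphisms (componentwise maps and an order-reversing swap), and the same reduction of a hypothetical finite distinguishing colouring $c$ of $G$ to a finite distinguishing colouring $q\mapsto(c(q_0),c(q_1))$ of $(\mathbb{Q},<)$, contradicting Theorem~\ref{thm:dist-q}. The only point where you diverge is in determining $\Aut G$: the paper uses neighbourhood containments ($N(q^+)\subseteq N(r^+)$ iff $q\ge r$, together with the fact that the action on one side of the bipartition determines the other), whereas your density argument forcing $\sigma=\tau$ is an equally valid order-theoretic substitute.
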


\begin{proof}
Let $\mathbb Q^+$ and $\mathbb Q^-$ be two disjoint copies of $\mathbb Q$. Denote the elements corresponding to $q \in \mathbb Q$ in these copies by $q^+$ and $q^-$, respectively. Consider the (undirected) graph $G = (V,E)$ where $V =\mathbb Q^+ \cup \mathbb Q^-$ and $q^+r^- \in E$  whenever $q < r$. Figure \ref{fig:induced} shows a small subgraph of this graph to give an idea of what it looks like. Clearly $G$ is countable and connected.

Note that $G$ is bipartite with bipartition $\mathbb Q^+ \cup \mathbb Q^-$. Hence every automorphism $\gamma$ of $G$ either fixes $\mathbb Q^+$ and $\mathbb Q^-$ set-wise, or swaps the two sets. Furthermore if $\gamma q^+ = r^+$ then $\gamma q^- = r^-$ because $q^-$ is the unique vertex with the property $N(q^-) = \bigcap_{v \sim q^+} N(v) \setminus \{q^+\}$. A similar argument shows that if $\gamma q^+ = r^-$ then $\gamma q^- = r^+$. So the action on $\mathbb Q^+$ uniquely determines an automorphism of $G$.

Now, we define a family of automorphisms of $G$ (we will later show that these are in fact all the automorphisms of $G$). For every order automorphism $\gamma$ of $\mathbb Q$, define the functions $\gamma_\uparrow$ and $\gamma_\downarrow$ as follows: 
\begin{itemize}
\item $\gamma_\uparrow$ applies $\gamma$ to both copies of $\mathbb Q$, i.e.\ $\gamma_\uparrow(q^+) = (\gamma(q))^+$, $\gamma_\uparrow(q^-) = (\gamma(q))^-$,
\item $\gamma_\downarrow$ first applies $\gamma$ to both copies, then reverses the order on each of them and swaps them, i.e.\ $\gamma_\downarrow(q^+) = (-\gamma(q))^-$, and $\gamma_\downarrow(q^-) = (-\gamma(q))^+$.
\end{itemize}
It is straightforward to check that these maps are indeed automorphisms of the graph $G$.

To see that $G$ is arc transitive, notice that the arc $0^+1^-$ can be mapped to any arc of the form $q^+r^-$ by the automorphism $\gamma_\uparrow$ where
\[
	\gamma(x) = q+(r-q)x.
\]
The map $\gamma$ is an order automorphism of $\mathbb Q$ since $q^+r^- \in E$ implies that $q < r$. By analogous arguments, the arc $0^+1^-$ can be mapped to any arc of the form $q^-r^+$ by the automorphism $\gamma_\downarrow$ where
\[
	\gamma(x) = -q+(q-r)x.
\]

Every map of the type $\gamma_\uparrow$ and $\gamma_\downarrow$ moves infinitely many vertices. Thus, to show that $G$ has infinite motion, it suffices to prove that the automorphisms of the form $\gamma_\uparrow$ and $\gamma_\downarrow$ as defined above are the only automorphisms of $G$.

It is not hard to see that $q \geq r$ if and only if $N(q^+) \subseteq N(r^+)$. This implies that $N(\phi(q^+))\subseteq N(\phi(r^+))$ for every automorphism $\varphi$ of $G$. If $\varphi$ fixes $\mathbb Q^+$ set-wise we conclude that $\varphi$ preserves the order on $\mathbb Q^+$, hence it is equal to $\gamma_\uparrow$ for a suitable order automorphism $\gamma$. An analogous argument shows that if  $\varphi$ swaps $\mathbb Q^+$ and $\mathbb Q^-$, then $\varphi = \gamma_\downarrow$ for an order automorphism $\gamma$ of $\mathbb Q$.

Finally, assume that there is a distinguishing colouring $c$ of $G$ with $n < \infty$ colours. In particular this colouring would break every automorphism of the form $\gamma_\uparrow$. Hence the map $q \mapsto (c(q^+), c(q^-))$ would be a distinguishing colouring of $\mathbb Q$ with $n^2 < \infty$ colours, a contradiction to Theorem \ref{thm:dist-q}.
\end{proof}

\bibliographystyle{abbrv}
\bibliography{sources}

\end{document}